\def\br{\mathbb{R}}
\def\bc{\mathbb{C}}
\DeclareMathOperator{\Real}{\rm Re}
\newcommand{\du}[1]{#1^{\sharp}}
\newcommand{\pd}[1]{#1_{\sharp}}
\newtheorem{theorem}{Theorem}[section]
\newtheorem{lemma}[theorem]{Lemma}
\newtheorem{corollary}[theorem]{Corollary}
\newtheorem{proposition}[theorem]{Proposition}
\theoremstyle{remark}
\theoremstyle{definition}
\numberwithin{equation}{section}
\begin{document}

\title[Weak* continuous states on Banach algebras]{Weak* continuous states on Banach algebras}
 
\date{August, 2008.}

\author{Bojan Magajna}

\address{Department of Mathematics\\ University of Ljubljana\\
Jadranska 21\\ Ljubljana 1000\\ Slovenia}
\email{Bojan.Magajna@fmf.uni-lj.si}

\thanks{The author is grateful to David Blecher for a question which prompt the investigation
presented here and for his remarks.}

\thanks{Partially supported  by the Ministry of Science and Education of Slovenia.}

\keywords{Banach algebra, dissipative elements, normal states.}

\subjclass[2000]{Primary 46H05, 46B10; Secondary 47B44}

\begin{abstract}
We  prove that if a unital Banach algebra $A$ is the dual of a Banach space 
$\pd{A}$, then the set of weak* continuous states is weak* dense in the set of all states on $A$. 
Further, weak* continuous states linearly span  $\pd{A}$. 
\end{abstract}
 
\maketitle

\section{Introduction}
An important tool in functional analysis is Goldstine's theorem \cite[3.27]{F}, which says
that for a dual Banach space $A$ the unit ball 
of its predual $\pd{A}$ is weak* dense in the unit ball $B_{\du{A}}$ of the dual $\du{A}$ of 
$A$. 
Given a norm one element $x\in A$, we may consider the set of `states' $S^x(A)=\{\rho\in
B_{\du{A}}:\ \rho(x)=1\}$ and the subset of `normal states' $S^x_n(A)=S^x\cap\pd{A}$. In 
general
$S^x_n(A)$ need not be weak* dense in $S^x(A)$, for $S^x_n(A)$ may even be empty if $x$ does not achieve its norm
as a functional on $\pd{A}$. In this note we show that if $A$ is a unital Banach algebra
and $x=1$ is the unit of $A$ (with $\|1\|=1$), then the set of normal states 
$S_n(A):=S^1_n(A)$ is weak* dense in the set $S(A):=S^1(A)$ of all states. 
Using this, we also show that $S_n(A)$ spans the predual of $A$. Of course, 
all this is well known for von Neumann algebras.
That $S(A)$ spans $\du{A}$ for any unital Banach algebra $A$ was proved by Moore \cite{Mo} 
(\cite{Si1} and \cite{AE} contain simpler proofs). 

Our method is based on a consideration of dissipative elements.  Recall 
that an element $a\in A$ is {\em dissipative} if its numerical range
$W(a):=\{\rho(a):\ \rho\in S(A)\}$ is contained in the left half-plane $\Real z\leq0$. To show
that the set $D_A$ of all such elements is weak* closed, if $A$ is a dual space, we will
need a suitable metric characterization of dissipative elements (Lemma \ref{le21} below). 
A similar, but not the same, characterization was observed
in \cite{BM} for  C$^*$-algebras; however, the argument from \cite{BM} does not
apply to Banach algebras. For each $a\in D_A$ the element $1-a$ is invertible since its
numerical range (hence also its spectrum) is contained in the half-plane $\Real z\geq1$.
We will only need the estimate
\begin{equation}\label{22}\|(1-a)^{-1}\|\leq1\ \ (a\in D_A),
\end{equation}
which is known from the Hille--Yosida theorem on  generators of operator semigroups.
In our present context it can easily be derived
by applying the well known estimate $\|e^{ta}\|\leq1$, $t\geq0$ (see
\cite[p. 55]{BD} or \cite[A13(4)]{T}) to the integral representation 
$(1-a)^{-1}=\int_{0}^{\infty}e^{-t(1-a)}\, dt$
, which  can be verified directly.

\section{Dissipative elements and normal states}

\begin{lemma}\label{le21} An element $a$  of a unital Banach algebra
$A$ is dissipative if and only if 
\begin{equation}\label{21}\|1+ta\|\leq1+t^2\|a\|^2\ \mbox{for all}\ t\geq0.\end{equation}
In particular, if $\|a\|\leq1$, then $a\in D_A$ if and only if $\|1+ta\|\leq 1+t^2$ for all
$t\geq0$. 
\end{lemma}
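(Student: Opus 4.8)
The plan is to prove the two implications separately, drawing only on the definition $W(a)=\{\rho(a):\rho\in S(A)\}$ of the numerical range and on the resolvent estimate \eqref{22}. The implication ``\eqref{21} $\Rightarrow a\in D_A$'' is elementary; the converse is where a small trick is needed, namely the factorization $(1-ta)(1+ta)=1-t^2a^2$.

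\emph{Suppose \eqref{21} holds.} Fix a state $\rho\in S(A)$ and put $\rho(a)=x+iy$, so $x=\Real\rho(a)$. Since $\rho(1)=1$ and $\|\rho\|=1$, \eqref{21} gives $|1+t\rho(a)|=|\rho(1+ta)|\le\|1+ta\|\le1+t^2\|a\|^2$ for all $t\ge0$. Squaring both sides, expanding $|1+t\rho(a)|^2=1+2tx+t^2|\rho(a)|^2$, discarding the nonnegative term $t^2|\rho(a)|^2$, and dividing by $2t>0$, I obtain $x\le t\|a\|^2+\tfrac{1}{2}t^3\|a\|^4$. Letting $t\downarrow0$ yields $\Real\rho(a)\le0$; as $\rho$ ranges over $S(A)$, this says exactly that $a$ is dissipative.

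\emph{Suppose $a\in D_A$.} For $t\ge0$ the element $ta$ is again dissipative, since $W(ta)=tW(a)$; hence $1-ta$ is invertible (as noted in the introduction) and \eqref{22}, applied to $ta$ in place of $a$, gives $\|(1-ta)^{-1}\|\le1$. Multiplying out $(1-ta)(1+ta)=1-t^2a^2$ and rearranging gives $1+ta=(1-ta)^{-1}(1-t^2a^2)$, so
\[\|1+ta\|\le\|(1-ta)^{-1}\|\,\|1-t^2a^2\|\le 1\cdot\bigl(1+t^2\|a^2\|\bigr)\le1+t^2\|a\|^2,\]
which is \eqref{21}. For the ``in particular'' statement: if $\|a\|\le1$ then \eqref{21} immediately gives $\|1+ta\|\le1+t^2$; conversely, assuming $\|1+ta\|\le1+t^2$ for all $t\ge0$, the squaring argument above (with $\|a\|$ replaced by $1$) gives $\Real\rho(a)\le t+\tfrac{1}{2}t^3\to0$ for every state $\rho$, so $a\in D_A$.

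I expect the only genuine obstacle to be noticing the two facts that make the converse short: that \eqref{22} may be applied after the rescaling $a\mapsto ta$, and that the identity $(1-ta)(1+ta)=1-t^2a^2$ converts the contractivity of the resolvent into the quadratic bound \eqref{21}. The remaining steps are routine manipulations of the numerical range together with submultiplicativity of the norm ($\|a^2\|\le\|a\|^2$).
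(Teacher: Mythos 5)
Your proof is correct and follows essentially the same route as the paper: the forward direction is the same squaring-and-limit argument with states, and the converse uses the resolvent bound \eqref{22} together with the factorization $1+ta=(1-ta)^{-1}(1-t^2a^2)$, which is exactly the paper's identity $\|1+a\|=\|(1-a)^{-1}(1-a^2)\|$ after the rescaling $a\mapsto ta$. The only cosmetic difference is that you rescale before factorizing while the paper factorizes first and then replaces $a$ by $ta$, and you spell out the ``in particular'' clause that the paper leaves as an easy remark.
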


\begin{proof} If $a$ satisfies (\ref{21}), then
for every state $\rho\in S(A)$ and $t>0$ we have that $|1+t\rho(a)|^2=|\rho(1+ta)|^2
\leq\|1+ta\|^2\leq
(1+t^2\|a\|^2)^2$. This implies that $2\Real\rho(a)\leq t(2\|a\|^2-|\rho(a)|^2)+t^3\|a\|^4$, hence (letting 
$t\to0$) $\Real\rho(a)\leq0$. 

For a proof of the reverse direction,  note that by (\ref{22}) each $a\in D_A$ satisfies
$$\|1+a\|=\|(1-a)^{-1}(1-a^2)\|\leq\|1-a^2\|\leq1+\|a\|^2.$$
But, since $ta$ is also dissipative if $t\geq0$, we may replace a by $ta$ in the last inequality,
which yields (\ref{21}). The last sentence of the lemma follows now easily. 
\end{proof}

In C$^*$-algebras the estimate (\ref{21}) can be improved to $\|1+ta\|^2\leq1+t^2\|a\|^2$ 
($a\in D_A$, $t\geq0$),
a consequence of the C$^*$-identity \cite{BM}. This sharper estimate holds also in some other 
natural examples of Banach algebras, but the author does not know if it holds in general.
Since this topic is not essential for our purposes here, we will
postpone further discussion on it to the end of the paper. 

\begin{theorem}\label{th21}
If a unital Banach algebra $A$ is  a dual Banach space, then $D_A$ is a weak*  
closed subset of $A$. 
Moreover, $S_n(A)$  is  weak* dense in $S(A)$.
\end{theorem}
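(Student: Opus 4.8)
The plan is to first establish that $D_A$ is weak* closed, and then deduce the density of $S_n(A)$ in $S(A)$ by a Hahn--Banach separation argument.

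For the first part, the key observation is that Lemma \ref{le21} gives a description of $D_A$ purely in terms of norm inequalities: $a\in D_A$ if and only if $\|1+ta\|\leq 1+t^2\|a\|^2$ for all $t\geq 0$. The subtlety is that the right-hand side involves $\|a\|$, which is only weak* lower semicontinuous, not weak* continuous, so one cannot simply pass to weak* limits in this inequality directly. I would handle this by fixing a constant: for each $r>0$ and each $t\geq 0$, the set
\begin{equation*}
E_{r,t}=\{a\in A:\ \|a\|\leq r,\ \|1+ta\|\leq 1+t^2r^2\}
\end{equation*}
is weak* closed, being the intersection of two weak* closed balls (translated and scaled). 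Then $D_A\cap rB_A=\bigcap_{t\geq0}E_{r,t}$ is weak* closed for every $r>0$. By the Krein--Smulian theorem, a convex set in a dual Banach space is weak* closed provided its intersection with every closed ball is weak* closed; since $D_A$ is clearly convex (its defining condition, dissipativity, is preserved under convex combinations because $W(a)$ behaves well, or more directly because $S(A)$ is convex so $\Real\rho(a)\leq 0$ is a convex condition), it follows that $D_A$ is weak* closed.

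For the second part, I want to show $S_n(A)$ is weak* dense in $S(A)$. Since $S(A)$ is weak* compact and convex, and $S_n(A)\subseteq S(A)$ is convex, it suffices (by Hahn--Banach in the locally convex space $(\du{A},\mathrm{weak}^*)$, whose continuous dual is $A$) to show that for every $a\in A$,
\begin{equation*}
\sup_{\rho\in S_n(A)}\Real\rho(a)\ \geq\ \sup_{\rho\in S(A)}\Real\rho(a).
\end{equation*}
Write $\beta=\sup_{\rho\in S(A)}\Real\rho(a)$, the maximum of $\Real W(a)$. Then $a-\beta 1$ is dissipative, i.e.\ $a-\beta 1\in D_A$, which by the first part is weak* closed. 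Now I would use Goldstine's theorem: the unit ball of $\pd A$ is weak* dense in $B_{\du A}$, so the weak* continuous states are weak* dense in $B_{\du A}$ in an appropriate sense, and in particular one can find normal functionals approximating a given state. The point is to transfer the dissipativity information to the normal states: since $a-\beta 1\in D_A$ and $D_A$ is weak* closed and convex, a separation argument in $A$ (viewed as the dual of $\pd A$) shows that the weak* continuous functionals of norm $\leq 1$ that are ``states relative to $a-\beta 1$'' are already enough to detect membership in $D_A$. Concretely, if some $\rho_0\in S(A)$ had $\Real\rho_0(a)>\sup_{\rho\in S_n(A)}\Real\rho(a)$, one could strictly separate, contradicting that $a-\beta 1$ (or a suitable translate) lies in the weak* closure of the relevant normal cone.

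The main obstacle I anticipate is the second part: making the separation argument for states precise, since the natural duality puts $A$ on the ``functional'' side and $\pd A$ on the ``space'' side, which is the reverse of the usual setup for states. The cleanest route is probably to argue by contradiction via Hahn--Banach directly in $(\du A,w^*)$ as sketched, using that $S_n(A)$ is $w^*$-dense iff its $w^*$-closed convex hull is all of $S(A)$, and that the gap between the two suprema above, if positive for some $a$, would force $a-\beta 1$ to have a strictly positive ``normal numerical range'' bound while still lying in $D_A$ — and one then needs the weak* closedness of $D_A$ together with Goldstine to derive a contradiction. Verifying that the relevant normal functionals can be normalized to be genuine normal states (i.e.\ that $\rho(1)=1$ can be arranged in the limit, using $\|1\|=1$) is the delicate bookkeeping step.
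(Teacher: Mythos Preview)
Your argument for the weak* closedness of $D_A$ is correct and essentially the paper's: both reduce via Krein--Smulian to showing $D_A\cap rB_A$ is weak* closed using the norm characterization in Lemma~\ref{le21}. The paper exploits that $D_A$ is a cone to reduce to $r=1$ and then uses the last sentence of the lemma, while you check all $r$ directly; this is a cosmetic difference.

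For the density of $S_n(A)$, your framework is right but the decisive step is missing, and the Goldstine idea is a red herring. Your contradiction runs as follows: if $\alpha:=\sup_{\rho\in S_n}\Real\rho(a)<\beta:=\sup_{\rho\in S}\Real\rho(a)$, set $b=a-\alpha 1$; then $\Real\rho(b)\le0$ for all $\rho\in S_n$ while $b\notin D_A$. Since $D_A$ is a weak* closed convex cone, Hahn--Banach separation in $(\pd{A},A)$ yields $\omega\in\pd{A}$ with $\Real\omega(b)>0$ and $\Real\omega(d)\le0$ for every $d\in D_A$. To finish you need that such an $\omega$ is a nonnegative multiple of a \emph{normal state}; this is exactly what you flag as ``delicate bookkeeping'' but do not carry out. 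The paper does precisely this, identifying the real polar $(D_A)^{\circ}$ with the cone $A^{\sharp+}$ of nonnegative multiples of states: testing $\omega$ on $it1\in D_A$ ($t\in\br$) and on $-1\in D_A$ gives $\omega(1)\ge0$, and testing on $a-\|a\|1\in D_A$ (and rotations $za$) gives $|\omega(a)|\le\|a\|\omega(1)$, so $\omega=\omega(1)\sigma$ with $\sigma\in S_n(A)$. Feeding this back, $\Real\sigma(b)>0$ contradicts the definition of $\alpha$. Once you insert this computation, your route and the paper's bipolar argument are two phrasings of the same proof; without it, the argument has a genuine gap, and no amount of Goldstine-type approximation of a fixed state by predual functionals will supply the missing normalization $\rho(1)=1$.
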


\begin{proof} The proof is the same as for operator spaces \cite{BM}. Since it is very short, 
we will sketch it here for completeness. Since $D_A$ is convex and 
$tD_A\subseteq D_A$ if $t\geq0$, to prove that $D_A$ is weak* closed,
it suffices to show that the intersection of $D_A$ with the closed unit ball of $A$ is weak*
closed (see e.g. \cite[4.44]{F}). But this follows immediately from Lemma \ref{le21}. 

Denote by $A^{{\sharp}+}$ the set of all 
nonnegative multiples of states on $A$
and by $(D_A)^{\circ}$ the set of all  $\rho\in A^{\sharp}$ such that 
$\Real\rho(a)\leq0$ for all $a\in D_A$. Clearly $A^{\sharp+}\subseteq (D_A)^{\circ}$. 
To prove that $A^{\sharp+}=(D_A)^{\circ}$, let $\rho\in (D_A)^{\circ}$.
Since $it1\in D_A$ for all $t\in\br$ and $-1\in D_A$, it follows that $\rho(1)\geq0$.
Since $a-\|a\|1\in D_A$ for each $a\in A$, we have that $\Real\rho(a)\leq
\|a\|\rho(1)$. Replacing in this inequality $a$ by $zx$ for all $z\in\bc$ with $|z|=1$, 
it follows that 
$|\rho(a)|\leq\|a\|\rho(1)$, hence $\rho\in A^{\sharp+}$. 

Now put $A_{\sharp}^+=A_{\sharp}\cap A^{{\sharp}+}$ and $(D_A)_{\circ}=
(D_A)^{\circ}\cap A_{\sharp}$.
Then $A_{\sharp}^+=(D_A)_{\circ}$.
Since $D_A$ is weak* closed, a bipolar type argument  shows that
$D_A=((D_A)_{\circ})^{\circ}$  and that 
$(D_A)_{\circ}$ 
is weak* dense in $(D_A)^{\circ}$. This means that $A_{\sharp}^+$
is weak* dense in $A^{\sharp+}$. Now it follows easily  that 
$S_n(A)$ is weak* dense in $S(A)$.
\end{proof}

\begin{corollary}\label{co1} Let $A$ be as in Theorem \ref{th21}. For every closed convex 
subset $C$ of $\bc$ the set $A_C=\{a\in A:\ W(a)\subseteq C\}$ is weak* closed in $A$.
\end{corollary}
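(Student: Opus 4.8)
The plan is to reduce the corollary to Theorem~\ref{th21} by a change-of-variable trick that turns an arbitrary closed convex $C\subseteq\bc$ into a half-plane, for which the condition $W(a)\subseteq C$ is precisely a dissipativity condition on a suitable affine image of $a$. Concretely, recall that a closed convex set $C$ is the intersection of all closed half-planes containing it, say $C=\bigcap_{\lambda} H_\lambda$ where each $H_\lambda=\{z\in\bc:\ \Real(\overline{\mu_\lambda}(z-c_\lambda))\leq0\}$ for appropriate $\mu_\lambda\in\bc$ with $|\mu_\lambda|=1$ and $c_\lambda\in\bc$. Since $W(a)\subseteq C$ if and only if $W(a)\subseteq H_\lambda$ for every $\lambda$, and since $A_C=\bigcap_\lambda A_{H_\lambda}$, it suffices to treat the case where $C=H$ is a single closed half-plane.

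For a half-plane $H=\{z:\ \Real(\overline{\mu}(z-c))\leq0\}$, I would observe that $W(\overline{\mu}(a-c1))=\overline{\mu}(W(a)-c)$ because $W$ is defined through the states $S(A)$ and $\rho(1)=1$ for every state, so $\rho(\overline{\mu}(a-c1))=\overline{\mu}(\rho(a)-c)$. Hence $W(a)\subseteq H$ is equivalent to $W(\overline{\mu}(a-c1))\subseteq\{z:\ \Real z\leq0\}$, i.e.\ to $\overline{\mu}(a-c1)\in D_A$. Thus $A_H=c1+\mu D_A$, the image of the weak*-closed set $D_A$ under the affine homeomorphism $b\mapsto c1+\mu b$ of $A$. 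Since this map is weak*-to-weak* continuous with weak*-continuous inverse (multiplication by a fixed scalar and translation by a fixed vector are weak* continuous, as $A$ is a dual space), $A_H$ is weak* closed.

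Finally, intersecting over all the half-planes $H_\lambda$ whose intersection is $C$ gives $A_C=\bigcap_\lambda A_{H_\lambda}$ as an intersection of weak*-closed sets, hence weak* closed. The only mild subtlety worth spelling out is the representation of $C$ as an intersection of closed half-planes: if $C=\bc$ there is nothing to prove ($A_C=A$), and otherwise the separating hyperplane theorem in the plane supplies, for each point $z_0\notin C$, a closed half-plane containing $C$ but not $z_0$, so $C$ is exactly the intersection of all closed half-planes containing it. I do not anticipate a genuine obstacle here; the main point is simply to recognize that membership in $A_C$ is an affine-invariant reformulation of dissipativity and then to invoke Theorem~\ref{th21}.
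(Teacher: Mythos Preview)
Your argument is correct and follows essentially the same route as the paper: the paper's proof simply observes that $C$ is an intersection of closed half-planes and that $A_{\{\Real z\le 0\}}=D_A$ is weak* closed by Theorem~\ref{th21}, leaving the affine change of variables and the intersection argument implicit. Your version just spells out these details (the identity $A_H=c1+\mu D_A$ and the weak* continuity of the affine map) more carefully.
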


\begin{proof} Since $C$ is the intersection of half-planes containing it, this follows
from the fact that $A_{\{\Real z\leq0\}}=D_A$ is weak* closed.
\end{proof}

\begin{theorem}\label{th22} If $A$ is as in Theorem \ref{th21}, then $S_n(A)$ spans 
$\pd{A}$. Each $\omega\in\pd{A}$ with $\|\omega\|<(e\sqrt{2})^{-1}$ (where $\log e=1$) can be 
written as $\omega=
t_1\omega_1-t_2\omega_2+i(t_3\omega_3-t_4\omega_4)$, where $\omega_j\in S_n(A)$ and $t_j\in
[0,1]$.
\end{theorem}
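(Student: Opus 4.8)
The plan is to prove the quantitative assertion — the displayed form of $\omega$ whenever $\|\omega\|<(e\sqrt2)^{-1}$ — from which it follows at once that $S_n(A)$ spans $\pd{A}$, since any nonzero $\omega\in\pd{A}$ enters that range after dividing by $2e\sqrt2\|\omega\|$. Let $V\subseteq\pd{A}$ be the set of all $t_1\omega_1-t_2\omega_2+i(t_3\omega_3-t_4\omega_4)$ with $\omega_j\in S_n(A)$ and $t_j\in[0,1]$. I will use three elementary facts: $S_n(A)=S(A)\cap\pd{A}$ is \emph{norm closed} in $\pd{A}$ (the defining conditions $\rho(1)=1$ and $\|\rho\|\le1$ survive norm limits); consequently $[0,1]S_n(A)$ is norm closed as well (from $t_n\rho_n\to\sigma$ one gets $t_n=\|t_n\rho_n\|\to\|\sigma\|$, and if $\|\sigma\|>0$ then $\rho_n\to\sigma/\|\sigma\|\in S_n(A)$); and $V$ is convex with $0\in V$, since a convex combination of states is a state (so $[0,1]S_n(A)$ is convex) and $V$ is a Minkowski sum of the four sets $\pm[0,1]S_n(A)$ and $\pm i[0,1]S_n(A)$.

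\emph{Step 1.} I first show $(e\sqrt2)^{-1}B_{\pd{A}}\subseteq\weakc{V}$, the norm closure of $V$. As $V$ is convex and $A$ is the dual of $\pd{A}$, $\weakc{V}$ agrees with the $\sigma(\pd{A},A)$-closure of $V$, so by the bipolar theorem it suffices to show that every $a\in A$ with $\Real\,\omega(a)\le1$ for all $\omega\in V$ satisfies $\|a\|\le e\sqrt2$. Setting three of the $t_j$ equal to $0$ and letting the remaining one run over $[0,1]$, such an $a$ satisfies $\pm\Real\rho(a)\le1$ and $\pm\Real(i\rho(a))\le1$ for every $\rho\in S_n(A)$, hence — since $S_n(A)$ is weak* dense in $S(A)$ by Theorem \ref{th21} and $\rho\mapsto\rho(a)$ is weak* continuous — for every $\rho\in S(A)$; equivalently $\max\{|\Real\mu|,|\Real(i\mu)|\}\le1$ for each $\mu\in W(a)$, whence $|\mu|\le\sqrt2$, i.e. the numerical radius $v(a):=\sup\{|\mu|:\mu\in W(a)\}$ is at most $\sqrt2$. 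The Bohnenblust--Karlin inequality $\|a\|\le e\,v(a)$ (see \cite{BD}; in the present context it follows from $\|e^{\zeta a}\|\le e^{|\zeta|v(a)}$, a variant of the \S1 estimate $\|e^{ta}\|\le1$, together with the Cauchy estimate for $a$ as the $\zeta$-coefficient of $e^{\zeta a}$, optimized over the radius) now gives $\|a\|\le e\sqrt2$, proving the claim.

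\emph{Step 2.} To pass from $\weakc{V}$ to $V$ I exploit the \emph{strict} inequality in the hypothesis; this is needed because $V$ is in general not norm closed. Given $\|\omega\|<(e\sqrt2)^{-1}$, put $\lambda_1=e\sqrt2\,\|\omega\|<1$; for $\omega\ne0$ the element $\omega/\lambda_1$ has norm $(e\sqrt2)^{-1}$, hence lies in $\weakc{V}$, so $\omega=\lambda_1v^{(1)}+\omega^{(2)}$ with $v^{(1)}\in V$ and $\|\omega^{(2)}\|$ as small as desired. Iterating, with the successive errors made to decay geometrically at a sufficiently small ratio $\theta$, produces $\omega=\sum_{k\ge1}\lambda_kv^{(k)}$ with $v^{(k)}\in V$, $\lambda_k\ge0$, and $\sum_k\lambda_k\le1$ — the budget $\sum_k\lambda_k\le1$ being attainable exactly because $1-\lambda_1>0$ absorbs the tail $\sum_{k\ge2}\lambda_k\le\theta/(1-\theta)$. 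Writing $v^{(k)}=t_1^{(k)}\omega_1^{(k)}-t_2^{(k)}\omega_2^{(k)}+i(t_3^{(k)}\omega_3^{(k)}-t_4^{(k)}\omega_4^{(k)})$ and regrouping the absolutely convergent series, $\omega=\Omega_1-\Omega_2+i(\Omega_3-\Omega_4)$ with $\Omega_j=\sum_{k\ge1}\lambda_kt_j^{(k)}\omega_j^{(k)}$. Each partial sum of $\Omega_j$ is a nonnegative multiple, bounded by $\sum_k\lambda_k\le1$, of a convex combination of normal states, hence lies in $[0,1]S_n(A)$; since $[0,1]S_n(A)$ is norm closed, $\Omega_j=s_j\omega_j$ with $s_j\in[0,1]$ and $\omega_j\in S_n(A)$. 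Therefore $\omega=s_1\omega_1-s_2\omega_2+i(s_3\omega_3-s_4\omega_4)\in V$, as required.

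\emph{Main obstacle.} Step 2 is the delicate point: $S_n(A)$, although weak* dense in the strictly larger set $S(A)$, is typically not weakly compact, so $V$ fails to be norm closed and the polar argument of Step 1 only confines the ball to $\weakc{V}$; turning this into genuine membership in $V$ is precisely what the strict hypothesis allows, through the approximate-then-merge construction (whose success hinges on $[0,1]S_n(A)$ being norm closed). The constant $(e\sqrt2)^{-1}$ is accounted for by the sharp Bohnenblust--Karlin factor $e$ and the factor $\sqrt2$ lost because $V$ only tests the four directions $\pm1,\pm i$; in a C$^*$-algebra an involution and the Jordan decomposition of a normal functional replace both, giving the larger admissible bound $1$.
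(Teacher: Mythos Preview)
Your proof is correct and follows essentially the same approach as the paper's: a bipolar argument combined with the Bohnenblust--Karlin bound $\|a\|\le e\,v(a)$ gives $(e\sqrt2)^{-1}B_{\pd{A}}\subseteq\overline{T_0}$ (your $V$ is the paper's $T_0=T-T+iT-iT$ with $T=[0,1]S_n$), and then a geometric iterative approximation, exploiting the strict inequality and the norm-closedness of $T$, upgrades this to membership in $T_0$. The only organisational difference is that you compute the polar of $V$ directly, whereas the paper first treats the strip set $U=S_\diamond$, then $iU\cap U$, and passes through $S_0={\rm co}(\pm S_n\cup\pm iS_n)$ before enlarging to $T_0$; the underlying ideas are the same.
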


\begin{proof} Put $S=S(A)$ and $S_n=S_n(A)$. For a subset $V$ of $A$ define the polar $V^{\diamond}$ by
$V^{\diamond}=\{\rho\in\du{A}:\ |\Real\rho(a)|\leq1\ \forall a\in V\}$. In the same way define
also polars of subsets of $\pd{A}$ and `prepolars' $V_{\diamond}$ of subsets of $A$ or
$\du{A}$. Let $U=S_{\diamond}$. Then $U$ is the set of all elements $a\in A$ with the numerical
range contained in the strip $|\Real z|\leq1$, hence  $U$ is
weak* closed by Corollary  \ref{co1}.  Since $S_n$ is weak* dense in $S_n$ by Theorem 
\ref{th21} and $U=S_{\diamond}$, it follows that $U=S_n^{\diamond}$, hence by the bipolar
theorem $U_{\diamond}=\overline{\rm co}(-S_n\cup S_n)$. Let $V=iU\cap U$.
Then $V_{\diamond}$ is equal to the norm closure of the 
convex hull of $(iU)_{\diamond}\cup
U_{\diamond}$, hence (since $(iU)_{\diamond}=-iU_{\diamond}=-i\overline{\rm co}(-S_n\cup S_n)$)
$V_{\diamond}$ is the closure of the set $S_0:={\rm co}(S_n\cup(-S_n)\cup(iS_n)\cup(-iS_n))$.
On the other hand, by the definition of $U$, $V$ is just the set of all $a\in A$ with
the numerical range $W(a)$ contained in the square $[-1,1]\times[-i,i]$. Since  for every $a\in A$
the inequality $\|a\|\leq ew(a)$ holds, where $w(a)$ is the numerical radius of $a$
(see \cite{BD} or \cite[2.6.4]{P}), it follows that $V$ is contained in the closed ball $(\sqrt{2}e)B_{A}$ of $A$ 
with the center $0$
and radius $\sqrt{2}e$. Consequently $\overline{S_0}=V_{\diamond}\supseteq
(\sqrt{2}e)^{-1}B_{\pd{A}}$.

Let $T=\{t\omega:\ \omega\in S_n,\ t\in[0,1]\}$. Since $S_n$ is norm closed and bounded and the interval 
$[0,1]$ is compact,
it is not hard to verify that $T$ is norm closed. Further, since $S_n\subseteq T$ is convex and
$tT\subseteq T$ for all
$t\in[0,1]$, it follows from the definition of $S_0$ that
$S_0\subseteq T_0:=T-T+iT-iT$. Therefore we conclude from the previous paragraph that $B_{\pd{A}}
\subseteq \sqrt{2}e\overline{T_0}$. Thus, given  $\omega\in \pd{A}$ and $\delta\in(0,1)$,
there exists $\omega_0\in \|\omega\|\sqrt{2}eT_0$ such that
$\|\omega-\omega_0\|<\delta.$
Applying the same  to  $\omega-\omega_0$, we find
$\omega_1\in \delta\sqrt{2}e T_0$ such that $\|\omega-\omega_0-\omega_1\|\leq\delta^2$. 
Continuing, we find a sequence of functionals $\omega_n\in\delta^n\sqrt{2}eT_0$ such that
$$\|\omega-\omega_0-\ldots-\omega_n\|\leq\delta^{n+1}.$$
Thus $\omega=\omega_0+\sum_{n=1}^{\infty}\omega_n$ is of the form
\begin{equation}\label{1}\omega=\sqrt{2}e(\|\omega\|\rho_0+\sum_{n=1}^{\infty}\delta^{n}\rho_n),
\end{equation}
where $\rho_n\in T_0$. By the definition of $T_0$ we have that $\rho_n=\rho_{n,1}-\rho_{n,2}
+i(\rho_{n,3}-\rho_{n,4})$, where $\rho_{n,j}\in T={\rm co}(\{0\}\cup S_n)$. Put 
$\gamma=\|\omega\|+\sum_{n=1}^{\infty}\delta^n=\|\omega\|+\delta(1-\delta)^{-1}$. Since $T$ is 
closed and convex, 
$\psi_j:=\gamma^{-1}(\|\omega\|\rho_{0,j}+\sum_{n=1}^{\infty}\delta^{n}\rho_{n,j})\in T$ 
for each $j$.
From (\ref{1}) we have now
\begin{equation}\label{23}\omega=(\sqrt{2}e)\gamma(\psi_1-\psi_2+i(\psi_3-\psi_4)),\end{equation}
a linear combination of normal states. If $\|\omega\|<(e\sqrt{2})^{-1}$, we may choose
$\delta$ so small that $\gamma\leq(e\sqrt{2})^{-1}$, and then we conclude from (\ref{23}) that $\omega$ is of the form
$\omega=\sum_{j=0}^3t_ji^j\omega_j$, where $\omega_j\in S_n$ and $t_j\in[0,1]$.
\end{proof}

The well-known characterizations of hermitian elements in a Banach algebra
\cite{V}, \cite{P}, \cite{BD} do not seem to imply easily that  
the real subspace $A^h$ of all such elements is weak* closed, if $A$ is a dual Banach space.
For this reason we state here another simple characterization. In the case of C$^*$-algebras
this characterization has been observed earlier by others and demonstrated to be 
useful \cite{BN}.

\begin{proposition} An element $h$ in a unital Banach algebra $A$ is hermitian if and only if
\begin{equation}\label{3}\|h+it1\|^2\leq\|h\|^2+t^2\ \mbox{for all}\ t\in\br.
\end{equation}
Thus, if $A$ is a dual Banach space, then $A^h$ is a weak* closed subset of $A$.
\end{proposition}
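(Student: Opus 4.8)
The plan is to mimic the structure of the proof of Lemma~\ref{le21}, since \eqref{3} is the obvious ``hermitian analogue'' of the dissipativity estimate \eqref{21}. Recall that $h$ is hermitian precisely when $\rho(h)\in\br$ for every $\rho\in S(A)$, equivalently when the numerical range $W(h)\subseteq\br$. One direction is a direct computation: assuming \eqref{3}, for any $\rho\in S(A)$ and $t\in\br$ we have $|\rho(h)+it|^2=|\rho(h+it1)|^2\le\|h+it1\|^2\le\|h\|^2+t^2$; writing $\rho(h)=\alpha+i\beta$ and expanding gives $\alpha^2+\beta^2+2\beta t\le\|h\|^2$ for all $t\in\br$, which forces $\beta=0$. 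Hence $W(h)\subseteq\br$ and $h$ is hermitian.

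For the converse, the standard fact (see \cite{BD}, \cite{P}) is that $h$ is hermitian if and only if $\|e^{ith}\|=1$ for all $t\in\br$, equivalently $\|e^{ith}\|\le1$ for all real $t$. I would then exploit the exponential representation in the spirit of the Lemma~\ref{le21} proof: for hermitian $h$ the element $ih$ generates a one-parameter group of isometries, and one can write an appropriate resolvent-type identity. Concretely, since $ih$ and $-ih$ both generate contraction semigroups (indeed isometry groups), $ih$ is dissipative in the sense of \eqref{22}, so $(1-ih)^{-1}$ and $(1+ih)^{-1}$ both have norm $\le1$. Then, exactly as in Lemma~\ref{le21},
$$\|h+i1\|=\|(1+ih)(-i)(1-ih)^{-1}(1-ih)\cdot(\text{rearrange})\|$$
should be massaged into $\|1+ih\|=\|(1-ih)^{-1}(1+ih)(1-ih)\cdot\ldots\|$; more cleanly, from $\|(1-ih)^{-1}\|\le1$ we get $\|1+ih\|=\|(1-ih)^{-1}(1+ih)(1-ih)\|\le\|(1-ih)(1+ih)\|=\|1+h^2\|\le1+\|h\|^2$. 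Replacing $h$ by $th/\|h\|\cdot\|h\|$, i.e.\ noting $th$ is hermitian for every real $t$, and substituting $th$ for $h$ in the inequality $\|1+ih\|^2\le(1+\|h\|^2)$ — actually in the non-squared form $\|1+ih\|\le1+\|h\|^2$ — yields $\|1+ith\|\le1+t^2\|h\|^2$, and scaling back (multiply inside by $\|h\|$ appropriately) gives \eqref{3}. The cleanest route: prove $\|1+ita\|\le1+t^2\|a\|^2$ for dissipative $a$ with $a=ih$ and also $a=-ih$, then combine; or simply observe that both $\pm ih$ lie in $D_A$ and apply Lemma~\ref{le21} to each, which already gives $\|1\pm ith\|\le1+t^2\|h\|^2$, and then upgrade the bound $1+t^2\|h\|^2$ to the sharper $\|h\|^2+t^2$ by a further scaling argument (replace $h$ by $sh$, optimise over $s>0$).

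For the final clause, once the equivalence \eqref{3} is established, weak* closedness of $A^h$ is immediate: $A^h=\bigcap_{t\in\br}\{h\in A:\ \|h+it1\|^2\le\|h\|^2+t^2\}$ is not literally of that form because $\|h\|$ appears on the right, but the set $A^h\cap(rB_A)$ for each $r>0$ equals $\bigcap_{t\in\br}\{h:\ \|h+it1\|^2\le r^2+t^2,\ \|h\|\le r\}$, an intersection of weak* closed sets (norm balls are weak* closed in a dual space), hence weak* closed; since $A^h$ is a linear subspace, by the Krein--Smulian theorem \cite[4.44]{F} $A^h$ itself is weak* closed. Alternatively, and more in line with Corollary~\ref{co1}, one observes directly that $A^h=A_{\br}=\{a:\ W(a)\subseteq\br\}$, and $\br$ is a closed convex subset of $\bc$, so weak* closedness follows at once from Corollary~\ref{co1}.

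The main obstacle I anticipate is the converse direction of \eqref{3}: extracting the precise constant $\|h\|^2+t^2$ on the right-hand side rather than the weaker $1+t^2\|h\|^2$ that falls out of a naive application of Lemma~\ref{le21} to $\pm ih$. This requires the homogeneity/scaling trick (substituting $sh$ and optimising in $s$), and a little care that the resolvent identity manipulation is valid — i.e.\ that $1-ih$ is genuinely invertible with $\|(1-ih)^{-1}\|\le1$, which is exactly \eqref{22} applied to the dissipative element $ih$. Everything else is routine.
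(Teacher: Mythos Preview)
Your forward direction is correct and matches the paper, and your route to weak* closedness through Corollary~\ref{co1} (taking $C=\br$) is valid. The genuine gap is in the converse implication of \eqref{3}.

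From Lemma~\ref{le21} applied to the dissipative elements $\pm ih$ you obtain only the unsquared bound $\|1+ith\|\le 1+t^2\|h\|^2$; rewriting $h+it1=it(1-ih/t)$ this gives $\|h+it1\|\le |t|+\|h\|^2/|t|$, which is strictly weaker than the required $\sqrt{\|h\|^2+t^2}$. Your proposed remedy --- replace $h$ by $sh$ and optimise over $s>0$ --- does not work: substituting $sh$ into $\|1+itk\|\le 1+t^2\|k\|^2$ and unwinding the homogeneity reproduces exactly the same one-parameter family of inequalities (indexed by $ts$), so there is nothing left to optimise. What you actually need is the \emph{squared} estimate $\|1+ith\|^2\le 1+t^2\|h\|^2$, and the paper remarks right after Lemma~\ref{le21} that this sharpening is open for general dissipative elements in Banach algebras; it does not drop out of \eqref{22} or of scaling. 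The paper's proof of the converse instead invokes Sinclair's theorem \cite{Si}: for hermitian $h$ and any $\lambda\in\bc$ one has $\|h+\lambda 1\|=r(h+\lambda 1)=w(h+\lambda 1)$. Since $\rho(h)\in\br$ for every state $\rho$, $|\rho(h+it1)|^2=\rho(h)^2+t^2\le\|h\|^2+t^2$; taking the supremum over $\rho$ and using $\|h+it1\|=w(h+it1)$ yields \eqref{3} directly. Sinclair's result is the missing ingredient your outline lacks.
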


\begin{proof}If (\ref{3}) holds, then for each $\rho\in S(A)$ 
\begin{equation}\label{4}|\rho(h)+it|^2=|\rho(h+it1)|^2\leq\|h\|^2+t^2\ \ (t\in\br),
\end{equation} 
which implies (by letting $t\to\infty$) that $\rho(h)\in\br$, hence
$h$ is hermitian. Conversely, if $h$ is hermitian then (\ref{4}) holds. But, by a result of
Sinclair \cite{Si} the norm of an element of the form $a=h+\lambda1$, where $h$ is hermitian
and $\lambda\in\bc$, is equal to the spectral radius $r(a)$, hence also to the numerical 
radius $w(a)$ (since in general $r(a)\leq w(a)\leq\|a\|$). Thus,
taking in (\ref{4}) the supremum over all states $\rho\in S(A)$, we get (\ref{3}).

The above argument also shows that a contraction is hermitian if and only if $\|h+it1\|^2\leq
1+t^2$ for all $t\geq0$. This imlies that the intersection $A^h\cap B_A$ is weak* closed, hence
$A^h$ is weak* closed by the Krein--Smulian theorem.
\end{proof}
 
The estimate 
\begin{equation}\label{30}\|1+a\|^2\leq1+\|a\|^2,\end{equation}
which holds for all dissipative elements in C$^*$-algebras, holds in general Banach algebras
at least for dissipative elements of a special form. For example, using the fact that for
hermitian elements the norm is equal to the spectral radius, it can be shown that (\ref{30}) holds
for dissipative hermitian elements.

\begin{proposition} In any unital Banach algebra $A$ each dissipative element of the form 
$a=-tp$, where $p$ is an idempotent and $t\in(0,\infty)$, satisfies the estimate (\ref{30}).
\end{proposition}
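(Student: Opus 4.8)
The plan is to reduce \eqref{30} to the purely elementary inequality $\|1-tp\|^2\le 1+t^2\|p\|^2$ and to feed into it just one consequence of dissipativity, namely that $\|1-\lambda p\|\le 1$ for every $\lambda\in[0,1]$. First I would dispose of the case $p=0$, where $a=0$ and \eqref{30} reads $1\le 1$, and then record that any nonzero idempotent satisfies $\|p\|=\|p^2\|\le\|p\|^2$, so that $\|p\|\ge 1$. Since $D_A$ is convex with $tD_A\subseteq D_A$ for $t\ge 0$ and $-tp\in D_A$, also $-sp\in D_A$ for every $s\ge 0$ (equivalently, $\Real W(p)\ge 0$), so I may invoke \eqref{22} for all these elements at once.

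Next I would use $p^2=p$ to check directly that $(1+sp)^{-1}=1-\frac{s}{1+s}\,p$ for every $s\ge 0$, and combine this with \eqref{22} applied to the dissipative element $-sp$ (for which $1-(-sp)=1+sp$) to get $\|1-\frac{s}{1+s}\,p\|\le 1$. As $s$ runs over $[0,\infty)$ the scalar $\lambda=\frac{s}{1+s}$ runs over $[0,1)$, and continuity of the norm then upgrades this to $\|1-\lambda p\|\le 1$ for all $\lambda\in[0,1]$; in particular $\|1-p\|\le 1$.

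It then remains to split according to the size of $t$. If $0<t\le 1$, the inequality just proved gives $\|1-tp\|\le 1$, whence $\|1-tp\|^2\le 1\le 1+t^2\|p\|^2$. If $t>1$, I would use the decomposition $1-tp=(1-p)-(t-1)p$ together with the triangle inequality and $\|1-p\|\le 1$ to get $\|1-tp\|\le 1+(t-1)\|p\|$; squaring, \eqref{30} follows from the elementary estimate $(1+(t-1)\|p\|)^2\le 1+t^2\|p\|^2$, which holds because the right side minus the left side equals $\|p\|\big((2t-1)\|p\|-2(t-1)\big)$ and $(2t-1)\|p\|\ge 2t-1>2(t-1)$ thanks to $\|p\|\ge 1$.

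The computation is short, and the only point that needs attention is the range $t>1$: the more natural splitting $1-tp=(1-t)1+t(1-p)$ yields by the triangle inequality only $\|1-tp\|\le 2t-1$, which is already too weak when $\|p\|=1$ and $t>4/3$. One must instead group the terms so as to retain a factor $\|p\|$, and it is exactly the bound $\|p\|\ge 1$ that then makes the elementary estimate go through.
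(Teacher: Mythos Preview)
Your argument is correct and follows essentially the same route as the paper: both proofs first extract from dissipativity the single fact $\|1-p\|\le 1$ (equivalently $\|q\|\le 1$ for $q=1-p$), then split into the cases $t\le 1$ and $t>1$, using in the latter the decomposition $1-tp=q-(t-1)p$ together with the elementary inequality $(1+(t-1)\|p\|)^2\le 1+t^2\|p\|^2$, which needs only $\|p\|\ge 1$.

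The one genuine difference is in how you obtain $\|1-p\|\le 1$. The paper invokes the exponential criterion $\|e^{-sp}\|\le 1$ for $s\ge 0$ (citing \cite[p.~55]{BD}) and computes $e^{-sp}=(1-e^{-s})q+e^{-s}1$, reading off $\|q\|\le 1$. You instead stay within the paper's own toolkit, applying the resolvent bound \eqref{22} to $-sp$ and computing $(1+sp)^{-1}=1-\tfrac{s}{1+s}p$ explicitly, which gives $\|1-\lambda p\|\le 1$ for all $\lambda\in[0,1]$ and in particular $\|1-p\|\le 1$. This is a pleasant economy: you avoid importing the exponential criterion and use only what the introduction already established.
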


\begin{proof}  By the well known criterion \cite[p. 55]{BD} $-p$ is dissipative
if and only if $\|e^{-tp}\|\leq1$ for all $t\geq0$. From the Taylor series expansion of
$e^{-tp}$ we compute that $e^{-tp}=(1-s)q+s1$, where $q=1-p$ and $s=e^{-t}$. So, $-p$ is dissipative
if and only if $\|(1-s)q+s1\|\leq1$ for all $s\in[0,1]$, which is equivalent to $\|q\|\leq1$. 
Since the norm of any nonzero idempotent is at least $1$, we conclude that 
$-p$ (hence also $-sp$, if $s>0$) is dissipative if and only if $\|q\|=1$  

To prove (\ref{30}), where $a=-tp$ is assumed dissipative (thus $\|q\|=1$), consider first the case when $t\in(0,1]$. Put $s=1-t$
and note that 
$$\|1-tp\|^2=\|s1+(1-s)q\|^2\leq (s+(1-s)\|q\|)^2=1\leq1+t^2\|p\|^2.$$
On the other hand, if $t>1$, then
$$\begin{array}{lll}\|1-tp\|^2&=&\|q+(1-t)p\|^2\leq(1+(t-1)\|p\|)^2\\
 &=&1+t^2\|p\|^2-2t\|p\|(\|p\|-1)-\|p\|(2-\|p\|)\leq1+t^2\|p\|^2,
 \end{array}$$
since $1\leq\|p\|=\|1-q\|\le2$. 
\end{proof}

{\em Question.} Do all dissipative elements in each unital Banach algebra satisfy (\ref{30})?

\end{document}